\documentclass[11pt]{amsart}

\usepackage[margin=1in]{geometry}
\usepackage{amsmath,amssymb,amsthm}
\usepackage{enumitem}
\usepackage{url}

\numberwithin{equation}{section}

\newtheorem{theorem}{Theorem}[section]
\newtheorem{proposition}[theorem]{Proposition}
\newtheorem{lemma}[theorem]{Lemma}
\newtheorem{corollary}[theorem]{Corollary}

\theoremstyle{definition}
\newtheorem{definition}[theorem]{Definition}
\newtheorem{example}[theorem]{Example}
\newtheorem{remark}[theorem]{Remark}

\title[Asymptotic Universal Koszulity]{Asymptotic Universal Koszulity in Galois Cohomology}

\author{Marina Palaisti}
\address{Huron University College, 1349 Western Rd, London, Ontario, N6G 1H3, Canada}
\email{mpalaist@huron.uwo.ca}
\date{\today}

\begin{document}
	
	\maketitle

	\begin{abstract}
		We introduce the notion of asymptotic universal Koszulity for graded-commutative algebras generated in degree~$1$, which formalizes the idea that an infinite-dimensional algebra can be approximated by a filtered system of finite-type universally Koszul subalgebras. We establish both a filtered-colimit criterion and an intrinsic local criterion, formulated in terms of finite-dimensional subspaces of the degree-$1$ component. We then apply this framework to continuous mod-$p$ cohomology of profinite groups. Under a natural cohomological colimit hypothesis, together with universal Koszulity of the finite-type quotient pieces, we prove that the full cohomology algebra is asymptotically universally Koszul. For finitely generated pro-$p$ quotients with injective inflation on cohomology, we identify the image of the quotient cohomology algebra with the canonical quadratic subalgebra generated by the image of degree-$1$ classes. Finally, we formulate a conditional local-global criterion that isolates the finite-dimensional detection needed to deduce asymptotic universal Koszulity in arithmetic settings.
	\end{abstract}
	
	\noindent{\textbf{Keywords:}} Galois cohomology; asymptotic universal Koszulity; pro-$p$ groups; quadratic algebras; local-global descent; patching methods\\
	
	\noindent{\textbf{Classification (MSC2020):}} 12G05; 12F10; 16S37; 20E18

	\section{Introduction}
	
	Let \(p\) be a fixed prime and let \(K\supset \mu_p\) be a field. Write
	\[G_K(p) := \mathrm{Gal}(K(p)/K)\quad\text{and}\quad A_K^\bullet := H^\bullet(G_K(p),\mathbb{F}_p) \]
	for the maximal pro-\(p\) Galois group of \(K\) and its mod-\(p\) cohomology ring respectively. In many arithmetic situations, \(A_K^\bullet\) is a quadratic algebra \cite{MinacPasiniQuadrelliTan,Voevodsky} and satisfies strong homological regularity properties, such as Koszulity or universal Koszulity; see, for example, \cite{MinacPalaistiPasiniTan,PositselskiKoszul,Quadrelli}. 
	
	Such properties are formulated when the degree-$1$ part of the algebra is finite-dimensional, i.e. when the algebra is of finite-type. However, for many profinite groups arising in Galois theory, and in particular, for many infinite extensions, the cohomology algebra is no longer of finite-type. This raises the problem of how to retain meaningful Koszul-type structure beyond the finite-dimensional setting. 
	
	The goal of this paper is to introduce and study a notion that captures the residual finite-type linearity present in such situations. The guiding principle is that, even when a graded algebra is too large to be universally Koszul in the classical finite-type sense, one may still hope to recover universal Koszulity on every finite collection of degree-$1$ classes.

	We formalize this idea by introducing \emph{asymptotic universal Koszulity}: a connected graded algebra generated in degree~$1$ is said to be asymptotically universally Koszul if it arises as a filtered colimit of finite-type universally Koszul quadratic subalgebras in a way that captures every finite-dimensional subspace of its degree-$1$ component. We then establish several structural results. First, asymptotic universal Koszulity is preserved under filtered colimits of finite-type universally Koszul quadratic algebras with injective transition maps. Second, for graded-commutative quadratic algebras, we obtain an intrinsic local criterion, formulated in terms of finite-dimensional subspaces of the degree-$1$ component, that implies asymptotic universal Koszulity. Third, we prove a finite-type subalgebra result showing that, under a natural compatibility condition on generators and quadratic relations, certain finitely generated subalgebras inherit universal Koszulity from the approximating finite-type cohomology rings.
	
	We then turn to continuous mod-$p$ cohomology of profinite groups. Under a mild colimit hypothesis--namely, generation in degree-$1$ together with injectivity of inflation in low degrees along a cofinal system of quotients--we show that asymptotic universal Koszulity of the finite-type pieces passes to the full cohomology algebra. This gives a general criterion for showing that \(H^\bullet(G,\mathbb F_p)\) is asymptotically universally Koszul.

	Finally, given a continuous epimorphism \(G \twoheadrightarrow Q\) onto a finitely generated pro-$p$ group, we analyze the image of \(H^\bullet(Q, \mathbb{F}_p)\) inside \(H^\bullet(G, \mathbb{F}_p)\) under inflation. Under injectivity of inflation, we identify this image with the canonical quadratic subalgebra generated by the image of \(H^1(Q, \mathbb{F}_p)\). We also formulate a conditional local-global criterion that isolates the type of finite-dimensional detection needed to deduce asymptotic universal Koszulity in arithmetic settings.

	The results of this paper develop a natural extension of universal Koszulity to infinite-type Galois cohomology. Instead of relying on global homological constraints, the asymptotic approach extracts finite-dimensional structural data and assembles it into a filtered framework. We hope that our perspective opens the door to new connections between homological algebra, pro-$p$ Galois theory, and local-global principles.	
	
	We conclude with several questions concerning the scope of asymptotic universal Koszulity, its possible strengthening to obstruction results for Galois groups, and its behavior in arithmetic settings arising from patching and from composita constructions such as those studied in \cite{PalaistiComposita}.

	\section{Quadratic algebras and universal Koszulity}

	We briefly recall the algebraic notions that underlie the asymptotic framework developed in this paper. Many mod-$p$ cohomology rings arising in Galois theory are naturally graded-commutative quadratic algebras, and several important finiteness and linearity properties are most naturally formulated in that setting. Among these, universal Koszulity provides a strong finite-type regularity condition that will serve as the basic local model throughout the paper. For background on quadratic and Koszul algebras and the relevant homological framework, see \cite{Froberg,PolishchukPositselski,Weibel}.
	
	\begin{definition}
		A connected graded \(k\)-algebra \(A = \displaystyle \bigoplus_{n\geq 0} A_n\), generated in degree \(1\), is \emph{quadratic} if \[ A \cong T(V)/\langle R \rangle, \] for a (possibly infinite-dimensional) vector space \(V = A_1\) and a subspace \(R \subset V \otimes_k V\).
	\end{definition}
	
	Universal Koszulity is the finite-type regularity condition that will be approximated asymptotically in the sequel.
	
	\begin{definition}\label{def:UK}
		Let \(A\) be a quadratic algebra with \(\dim_k A_1 < \infty\). Let
		\[ \mathcal L(A) := \{\, I\subseteq A \mid I \text{ is a graded left ideal and } I = A I_1\,\}\] be the set of graded left ideals generated in degree \(1\). We say that \(A\) is \emph{universally Koszul} if for every \(I\in\mathcal L(A)\) and every \(x \in A_1 \setminus I_1\), the left colon ideal \[ I:x \;:=\; \{\,a \in A \mid ax \in I\,\}\] 	belongs to \(\mathcal L(A)\) (equivalently, \(I:x\) is generated in degree \(1\)).
	\end{definition}

	\begin{remark}
		If \(A\) is graded-commutative (as in mod-\(p\) cohomology rings), then ``graded left ideal''	coincides with ``graded ideal'', and the colon ideal \(I:x\) is two-sided.
	\end{remark}

	Restricting attention to quadratic algebras whose degree-$1$ part is finite-dimensional, universal Koszulity admits several equivalent formulations, including a characterization in terms of colon ideals and linear resolutions of cyclic modules (see, for instance, \cite{PalaistiThesis}). In particular, every cyclic module \(A/I\), with \(I\) generated in degree \(1\), admits a linear minimal free resolution, a perspective that is often useful in establishing stability properties in concrete situations. These equivalences and homological consequences depend fundamentally on finiteness assumptions and do not extend to infinite-type settings, which necessitates the asymptotic framework developed in the next section.

	\section{Asymptotic universal Koszulity}
	
	We now introduce the central notion of this paper. The guiding idea is that an infinite-type graded algebra may fail to satisfy universal Koszulity globally, while still being well approximated by finite-type universally Koszul pieces.
	
	Throughout this section, let \(k\) be a field. The following definition formalizes this principle by requiring a filtered system of finite-type universally Koszul quadratic subalgebras that captures every finite-dimensional part of the degree-$1$ structure. 
	
	\begin{definition}\label{def:asymp}
		Let \(A\) be a connected graded $k$-algebra generated in degree \(1\). We say that \(A\) is \emph{asymptotically universally Koszul} if there exists a filtered system of quadratic graded subalgebras \( \displaystyle (A_i)_{i\in I}\subset A\), such that:
		\begin{enumerate}[label=(\roman*)]
			\item each \(A_i\) is generated in degree \(1\) with \(\dim_k (A_i)_1 < \infty\);
			\item each \(A_i\) is universally Koszul;
			\item the natural map \(\displaystyle \varinjlim_{i\in I} A_i \to A\) is an isomorphism of graded algebras;
			\item every finite-dimensional subspace of \(A_1\) is contained in some \((A_i)_1\).
		\end{enumerate}
	\end{definition}
	
	Intuitively, the asymptotic condition asks that every finite collection of degree-$1$ classes should already live inside a finite-type universally Koszul piece, and that these pieces fit together coherently across enlargements.
	
	The first basic observation is that the defining approximation property is stable under filtered colimits of finite-type universally Koszul algebras with injective transition maps. In particular, the new notion applies naturally to algebras built from compatible finite-dimensional pieces.
	
	\begin{proposition}\label{prop:colim}
		Let \((A_i)_{i\in I}\) be a filtered system of quadratic graded $k$-algebras with injective transition maps, such that each \(A_i\) is of finite-type and universally Koszul. Then \(A := \displaystyle \varinjlim_{i\in I} A_i\) is asymptotically universally Koszul.
	\end{proposition}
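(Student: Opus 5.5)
The plan is to take as approximating system the images of the $A_i$ in the colimit. Write $\iota_i\colon A_i\to A$ for the canonical maps. Since the index category is filtered and all transition maps $\varphi_{ij}\colon A_i\to A_j$ are injective, each $\iota_i$ is injective. Indeed, an element $a\in A_i$ with $\iota_i(a)=0$ is already killed by some transition map $\varphi_{ij}$, which forces $a=0$. Hence $\iota_i$ identifies $A_i$ with a graded subalgebra $B_i:=\iota_i(A_i)\subset A$. The family $(B_i)_{i\in I}$ is filtered with respect to inclusion, because $i\le j$ gives $B_i\subseteq B_j$ via $\iota_j\circ\varphi_{ij}=\iota_i$. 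I will check conditions (i)--(iv) of Definition~\ref{def:asymp} for this system, after first confirming that $A$ itself satisfies the standing hypotheses.

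First I show that $A$ is connected, graded and generated in degree $1$. Filtered colimits of graded algebras are computed degreewise on underlying vector spaces, so $A_n=\varinjlim (A_i)_n$ and $A_0=\varinjlim k=k$. Every element of $A$ lies in some $B_i$, which is generated by $(B_i)_1\subset A_1$; therefore $A$ is generated in degree $1$. Conditions (i) and (ii) are immediate, since $B_i\cong A_i$ as graded algebras. In particular $B_i$ is quadratic, generated in degree $1$, of finite type, and universally Koszul, because universal Koszulity is an isomorphism invariant.

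For (iii), the system $(B_i)$ has the same colimit as $(A_i)$ because $\iota_i$ is an isomorphism onto $B_i$ compatible with the transition maps. The induced map $\varinjlim B_i\to A$ is the identity, and it can equally be viewed as the union $\bigcup_i B_i=A$. For (iv), let $W\subset A_1$ be finite-dimensional with basis $w_1,\dots,w_r$. Each $w_s$ lies in some $(B_{i_s})_1$. By filteredness there is an index $j$ with $j\ge i_s$ for all $s$, so $W\subseteq (B_j)_1$.

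I expect no serious obstacle. The only point needing care is the injectivity of $\iota_i$, which uses both filteredness and the injectivity of the transition maps: elements of a filtered colimit of vector spaces become equal exactly when they agree at some later stage. (If "filtered system" is meant in the directed-poset sense this is standard; for a general filtered category the same argument applies, using that parallel arrows become equalized later.) Although not required, one could also note that $A$ is quadratic. Since $R_A=\bigcup R_{A_i}$, a relation among degree-one classes in higher degree already holds in some $A_i$ and is therefore a consequence of quadratic relations there.
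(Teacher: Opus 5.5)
Your proof is correct and takes the same route as the paper: identify each $A_i$ with its image in the colimit, use filteredness to place any finite-dimensional $W\subseteq A_1$ inside a single $(A_k)_1$, and observe that the colimit map is an isomorphism. You also justify two points the paper leaves implicit, namely that the canonical maps $A_i\to A$ are injective (from filteredness plus injective transition maps) and that $A$ is connected and generated in degree~$1$.
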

	
	\begin{proof}
		We identify each \(A_i\) with its image in \(A\) via the injective maps. Let \(W\subseteq A_1\) be finite-dimensional and choose a basis \(\{w_1,\dots,w_r\}\) of \(W\). For each \(j\), pick \(i_j\in I\), such that \(w_j\in (A_{i_j})_1\). Since \(I\) is filtered, there exists \(k\in I\), such that \(i_j\le k\) for all \(j\), hence \((A_{i_j})_1\subseteq (A_k)_1\) for all \(j\). Therefore \(W\subseteq (A_k)_1\). By construction of the filtered colimit in the category of graded algebras, the canonical map \(\varinjlim_i A_i\to A\) is an isomorphism. Thus all conditions in Definition~\ref{def:asymp} are satisfied.
	\end{proof}
	
	We record the following immediate consequence for exterior algebras.
	
	\begin{corollary}
		Let \(k\) be a field, and let \(V\) be an arbitrary \(k\)-vector space. Then the exterior algebra
		$\bigwedge(V)$ is asymptotically universally Koszul.
	\end{corollary}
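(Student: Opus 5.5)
The plan is to realize \(\bigwedge(V)\) as a filtered colimit of exterior algebras on finite-dimensional subspaces and then apply Proposition~\ref{prop:colim}. Let \(\mathcal{F}\) be the set of finite-dimensional subspaces \(W \subseteq V\), ordered by inclusion. This poset is filtered, since \(W + W'\) is again finite-dimensional. For \(W \subseteq W'\), functoriality of the exterior algebra gives a map \(\bigwedge(W) \to \bigwedge(W')\). This map is injective: choose a complement \(W'=W\oplus U\); then \(\bigwedge(W')\cong \bigwedge(W)\otimes\bigwedge(U)\) as graded vector spaces, and \(\bigwedge(W)\) embeds as \(\bigwedge(W)\otimes 1\).

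Next I will check that \(\varinjlim_{W\in\mathcal F}\bigwedge(W)\cong\bigwedge(V)\). The canonical map is surjective, because any element of \(\bigwedge(V)\) is a finite sum of wedge monomials. Each such monomial involves only finitely many vectors, which lie in a single \(W\). The map is injective, because an element of \(\bigwedge(W)\) that vanishes in \(\bigwedge(V)\) already vanishes there, by the same complement argument applied to \(V = W\oplus U\). Equivalently, \(\bigwedge\) is a left adjoint-type construction \(T(V)/\langle v\otimes v\rangle\) and commutes with filtered colimits, and \(V=\varinjlim W\). Each \(\bigwedge(W)\) is quadratic, with relations spanned by \(w\otimes w'+w'\otimes w\) and \(w\otimes w\), and it is of finite type.

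The main point, which I expect to be the only substantive step, is that \(\bigwedge(W)\) is universally Koszul for finite-dimensional \(W\). I will argue this directly. Let \(I\in\mathcal L(\bigwedge(W))\), so \(I\) is generated by \(I_1=U\subseteq W\), and let \(x\in W\setminus U\). Choose a basis \(u_1,\dots,u_m\) of \(U\), extend it by \(x\), and then by further vectors \(y_1,\dots,y_s\) to a basis of \(W\). Then \(\bigwedge(W)/I\cong\bigwedge(W/U)\), and \(x\) maps to a nonzero vector \(\bar x\) in \(W/U\). In an exterior algebra, the annihilator of a nonzero degree-\(1\) element \(\bar x\) is the ideal \(\bar x\wedge\bigwedge(W/U)\). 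This follows by writing \(\bigwedge(W/U)=\bigwedge(\bar x)\otimes\bigwedge(\langle \bar y_j\rangle)\). Pulling back, \(I:x=I+(x)\), which is generated in degree \(1\) by \(U+kx\).

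If one prefers, this step could instead be cited from the literature (e.g.\ \cite{PalaistiThesis}), but the computation above is short. Since the colon ideal is two-sided by graded commutativity, left versus right causes no issue. Proposition~\ref{prop:colim} then applies and gives the corollary.
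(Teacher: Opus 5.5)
Your proof is correct and follows the same route as the paper: you take the directed system of exterior algebras $\bigwedge(W)$ over the finite-dimensional subspaces $W\subseteq V$ and apply Proposition~\ref{prop:colim}. The only difference is that you also verify two facts the paper simply asserts: the injectivity of the transition maps, and the universal Koszulity of each $\bigwedge(W)$, via the computation $I:x=I+(x)$, which rests on the annihilator of a nonzero degree-$1$ element of an exterior algebra being the principal ideal it generates.
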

	
	\begin{proof}
		Let \(I\) be the directed poset of finite-dimensional subspaces \(W\subseteq V\), ordered by inclusion. Then
		\[V=\varinjlim_{W\in I} W, \qquad \bigwedge(V)=\varinjlim_{W\in I}\bigwedge(W),\]
		where the transition maps are the natural injective graded algebra homomorphisms.
		
		For each finite-dimensional subspace \(W\subseteq V\), the exterior algebra \(\bigwedge(W)\) is a finite-type quadratic algebra, and it is universally Koszul. Thus Proposition~\ref{prop:colim} applies to the filtered system
		\[\bigl(\bigwedge(W)\bigr)_{W\in I},\]
		and yields that \(\bigwedge(V)\)is asymptotically universally Koszul.
	\end{proof}

	The preceding corollary provides a canonical family of examples. We now give an intrinsic criterion for asymptotic universal Koszulity in terms of finite-dimensional subalgebras. Proposition~\ref{prop:colim} applies when a filtered colimit presentation is already given. The next result gives a more intrinsic criterion, showing that it is enough to produce sufficiently many finite-type universally Koszul subalgebras containing prescribed finite-dimensional degree-$1$ data.

	\begin{proposition}\label{prop:local}
		Let \(A\) be a connected graded $k$-algebra generated in degree \(1\). Assume that for every finite-dimensional subspace \(W \subset A_1\), there exists a quadratic subalgebra \(B_W \subset A\), such that:
		\begin{enumerate}[label=(\alph*)]
			\item \(W \subset (B_W)_1\) and \(\dim_k (B_W)_1 < \infty\);
			\item \(B_W\) is universally Koszul;
		\end{enumerate}
		and assume moreover that the family \((B_W)\) can be chosen to be filtered under inclusion. Then \(A\) is asymptotically universally Koszul.
	\end{proposition}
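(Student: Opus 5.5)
The plan is to check the four conditions of Definition~\ref{def:asymp} directly for the family \((B_W)\), with the set of finite-dimensional subspaces \(W\subset A_1\) serving as the index set. We take the index set \(\mathcal I\) to be the family \(\{B_W\}\) of chosen subalgebras, ordered by inclusion. The hypothesis that this family can be chosen filtered under inclusion says exactly that \(\mathcal I\) is a directed poset. The inclusion maps \(B_W\hookrightarrow B_{W'}\) are then injective graded algebra homomorphisms, so \((B_W)\) is a filtered system of quadratic graded subalgebras of \(A\). Conditions (i) and (ii) follow at once from (a) and (b). Each \(B_W\) is quadratic, hence generated in degree \(1\), and it has finite-dimensional degree-\(1\) part and is universally Koszul. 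Condition (iv) is precisely the first half of (a): any finite-dimensional \(W\subset A_1\) satisfies \(W\subset (B_W)_1\).

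The only real step is condition (iii), namely that the canonical map \(\varinjlim B_W\to A\) is an isomorphism. Since all transition maps are inclusions of subalgebras of \(A\), the colimit is canonically the union \(\bigcup_W B_W\subseteq A\). This union is a graded subalgebra because the family is directed: two elements of \(B_W\) and \(B_{W'}\) both lie in some common \(B_{W''}\), so sums and products stay in the union. Injectivity of the canonical map is then automatic. For surjectivity, we use that \(A\) is generated in degree \(1\). Every homogeneous element \(a\in A_n\) is a finite sum of products \(x_1\cdots x_n\) with \(x_j\in A_1\). Let \(W\) be the finite-dimensional span of all the \(x_j\) that appear. Then every \(x_j\) lies in \((B_W)_1\), so \(a\in B_W\) because \(B_W\) is a subalgebra. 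Hence the union is all of \(A\). The colimit is also taken in graded algebras, and the identification is compatible with the grading since each \(B_W\) is a graded subalgebra.

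I expect the only delicate point to be a bookkeeping issue: making clear that ``filtered under inclusion'' gives a directed index set and that the colimit along inclusions is the union. Once that is spelled out, as in the proof of Proposition~\ref{prop:colim}, the argument is formal. An alternative is to apply Proposition~\ref{prop:colim} to the system \((B_W)\), which yields that \(\varinjlim B_W\) is asymptotically universally Koszul, and then transport this along the isomorphism \(\varinjlim B_W\cong A\) established above. I would present the direct verification, since it also exhibits the approximating system explicitly.
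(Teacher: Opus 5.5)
Your proof is correct and follows essentially the same route as the paper: conditions (i), (ii) and (iv) come directly from hypotheses (a) and (b), and (iii) holds because degree-$1$ generation places every element of $A$ in some $B_W$, so the directed union along inclusions, which is the colimit, is all of $A$. You also spell out details the paper leaves implicit, namely why the directed union is a subalgebra and why the canonical map is injective.
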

	
	\begin{proof}
		By assumption, every finite-dimensional subspace of \(A_1\) is contained in some \((B_W)_1\). Since \(A\) is generated in degree \(1\), every homogeneous element of \(A\) lies in a subalgebra generated by finitely many degree-$1$ elements, hence lies in some \(B_W\). Therefore \[ A =\bigcup_W B_W \] as graded algebras. Since the family \((B_W)\) is filtered under inclusion,	the natural map \(\displaystyle \varinjlim_W B_W \longrightarrow A\) is an isomorphism. The conclusion follows from Definition~\ref{def:asymp}.
	\end{proof}
	
	Please note that the filteredness assumption in Proposition~\ref{prop:local} is essential. In general, the existence of finite-type universally Koszul subalgebras containing prescribed finite-dimensional subspaces does not guarantee that these subalgebras can be arranged into a filtered system under inclusion.
	
	This naturally leads to the question of when the local finite-type pieces in Proposition~\ref{prop:local} can be taken canonically.

	\section{Local finite-type structure}
	\label{sec:local}
	
	We now give an intrinsic formulation of asymptotic universal Koszulity in terms of finite-dimensional subspaces of the degree-$1$ part. Throughout this section, let $k$ be a field, and let $A = \bigoplus_{n \ge 0} A_n$ be a connected graded-commutative quadratic $k$-algebra.

	For any finite-dimensional subspace $W \subseteq A_1$, we denote by
	\[ B_W := \langle W \rangle_A\]
	the graded subalgebra generated by $W$. Since $A$ is quadratic, $B_W$
	is again quadratic, with relation space
	\[R_{B_W} = R_A \cap (W \otimes W).\]
	The point is that, in the graded-commutative quadratic setting, these canonical subalgebras provide a distinguished local approximation to \(A\) attached to each finite-dimensional part of \(A_1\).

	The next proposition shows that if these canonical local approximants are universally Koszul, then they already generate the desired finite-type asymptotic structure.
	
	\begin{proposition}
		\label{prop:b-implies-c}
		Let $A$ be a connected graded-commutative quadratic $k$-algebra. Assume that for every finite-dimensional subspace $W \subseteq A_1$, the canonical quadratic subalgebra $B_W$ is universally Koszul. Then $A$ is the directed union of finite-type universally Koszul quadratic subalgebras.
	\end{proposition}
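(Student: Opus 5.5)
The plan is to take as index set the directed poset $\mathcal I$ of all finite-dimensional subspaces $W\subseteq A_1$, ordered by inclusion, and the family of canonical subalgebras $(B_W)_{W\in\mathcal I}$. Once this family is shown to be monotone, directed, exhaustive, and made of finite-type universally Koszul quadratic algebras, the statement follows. Proposition~\ref{prop:local} then also gives asymptotic universal Koszulity directly.

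First, monotonicity. If $W\subseteq W'$, then $B_W=\langle W\rangle_A\subseteq\langle W'\rangle_A=B_{W'}$, because $B_{W'}$ is a subalgebra containing $W$. Directedness follows: given $W_1,W_2\in\mathcal I$, the sum $W_1+W_2$ is again finite-dimensional, and $B_{W_1}\cup B_{W_2}\subseteq B_{W_1+W_2}$. So the family is filtered under inclusion with injective transition maps (inclusions).

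Second, the properties of each piece. We have $(B_W)_1=W$, since $B_W$ is generated by $W$ inside the connected graded algebra $A$, and its degree-one part is just the span of the generators. Hence $\dim_k(B_W)_1<\infty$. By the discussion preceding the proposition, $B_W$ is quadratic with relation space $R_A\cap(W\otimes W)$, and by hypothesis it is universally Koszul.

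Third, exhaustion. Every homogeneous $a\in A_n$ is a finite sum of products of degree-one elements, because $A$ is quadratic and hence generated in degree $1$. Let $W$ be the span of the finitely many degree-one factors that occur. Then $a\in B_W$, so $A=\bigcup_W B_W$ as graded algebras. Since the union is directed, it coincides with $\varinjlim_W B_W$.

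The only step I expect to need care is the assertion that $B_W$ is quadratic with relations $R_A\cap(W\otimes W)$. A subalgebra generated by part of the degree-one space can in general acquire relations in higher degrees that do not follow from its quadratic ones. Since the paper asserts this before the proposition, I will invoke it as given. Strictly speaking, though, the hypothesis that $B_W$ is universally Koszul already presupposes that $B_W$ is quadratic, as Definition~\ref{def:UK} is stated for quadratic algebras. So the argument does not actually depend on the explicit description of the relation space.
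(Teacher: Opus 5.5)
Your proof is correct and follows essentially the same argument as the paper. You index over the directed poset of finite-dimensional subspaces $W\subseteq A_1$, use $B_W\subseteq B_{W'}$ for $W\subseteq W'$ with $W+W'$ giving directedness, and exhaust $A$ via generation in degree~$1$. Your caveat is well taken. The paper simply asserts that $B_W$ is quadratic with relations $R_A\cap(W\otimes W)$, which is not automatic for a subalgebra generated by part of $A_1$. Your observation that the hypothesis already presupposes quadraticity of $B_W$ (Definition~\ref{def:UK}) is the sounder justification.
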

	
	\begin{proof}
		Let $\mathcal{W}$ be the poset of finite-dimensional subspaces of $A_1$, ordered by inclusion. This poset is directed; indeed, given $W, W'$, the sum $W + W'$ is again finite-dimensional and contains both. For each $W \in \mathcal{W}$, the algebra $B_W$ is quadratic, and finite-type by construction. By assumption, it is universally Koszul. Moreover, if $W \subseteq W'$, then $B_W \subseteq B_{W'}$, since $B_W = \langle W \rangle_A \subseteq \langle W' \rangle_A = B_{W'}$. Thus the family $(B_W)_{W \in \mathcal{W}}$ forms a directed system. Finally, since $A$ is generated in degree-$1$, every homogeneous element of $A$ lies in the subalgebra generated by finitely many elements of $A_1$, hence in some $B_W$. Therefore
		\[	A = \bigcup_{W \in \mathcal{W}} B_W,\]
		and the union is directed. The result follows.
	\end{proof}

	We now observe that any directed union of finite-type universally Koszul quadratic subalgebras automatically satisfies the asymptotic property.
	
	\begin{proposition}\label{prop:c-implies-a}
		Let $A$ be a connected graded algebra, and assume that $A$ is the directed union of finite-type universally Koszul quadratic subalgebras. Then $A$ is asymptotically universally Koszul.
	\end{proposition}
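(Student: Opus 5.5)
The plan is to verify the four conditions of Definition~\ref{def:asymp} directly for the given family. Write $A=\bigcup_{j\in J} A_j$, where $(A_j)_{j\in J}$ is a family of finite-type universally Koszul quadratic graded subalgebras of $A$. The family is directed under inclusion: for $j,j'$ there is $j''$ with $A_j\cup A_{j'}\subseteq A_{j''}$. Regarding $J$ as a preorder via inclusion makes it a filtered index category, and all transition maps are inclusions, hence injective. Conditions (i) and (ii) hold by hypothesis. Condition (i) also asks that each $A_j$ be generated in degree $1$, which is part of being quadratic in the paper's definition.

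For condition (iii), I will use that a filtered colimit of subalgebras along inclusions is computed degreewise as the union in the category of graded vector spaces. Multiplication is well defined on it because any two elements lie in a common $A_j$. Hence the canonical map $\varinjlim_j A_j\to A$ is injective, since each $A_j\to A$ is an inclusion and the colimit is the union. It is surjective because $A=\bigcup_j A_j$. It is compatible with the graded algebra structures, so it is an isomorphism of graded algebras.

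For condition (iv), I will repeat the argument from the proof of Proposition~\ref{prop:colim}. Given a finite-dimensional $W\subseteq A_1$ with basis $w_1,\dots,w_r$, each $w_\ell$ lies in some $A_{j_\ell}$ because the union is all of $A$, and it lies in degree $1$ of it because the $A_j$ are graded subalgebras. Directedness then gives a single $j$ with $A_{j_\ell}\subseteq A_j$ for all $\ell$, so $W\subseteq (A_j)_1$.

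The only point needing care is the meaning of ``directed union''. It means the family is directed under inclusion, not merely that the union is all of $A$. Condition (iv) is exactly where directedness is used, and I take it as part of the hypothesis, as in Proposition~\ref{prop:b-implies-c}. Beyond that, everything reduces to Proposition~\ref{prop:colim}-style bookkeeping. Alternatively, one can note that the family is a filtered system with injective transition maps whose colimit is $A$, and invoke Proposition~\ref{prop:colim} directly.
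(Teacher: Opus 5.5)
Your proof is correct and follows the paper's route. The paper simply asserts that the given directed system satisfies conditions (i)--(iv) of Definition~\ref{def:asymp} by construction, and you spell out that verification: the union description of the colimit gives (iii), and directedness gives (iv). One point that neither you nor the paper states is that $A$ itself is generated in degree $1$, which the definition presupposes; it follows at once, since each $A_j$ is generated by $(A_j)_1\subseteq A_1$ and $A=\bigcup_j A_j$.
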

	
	\begin{proof}
		This is immediate from Definition~\ref{def:asymp}. The given directed system satisfies conditions \textrm{(i)}-\textrm{(iv)} by construction.
	\end{proof}

	Propositions~\ref{prop:b-implies-c} and~\ref{prop:c-implies-a} show that a local condition on finite-dimensional subspaces of $A_1$ implies asymptotic universal Koszulity. In particular, if all canonical quadratic subalgebras $B_W$ are universally Koszul, then $A$ is asymptotically universally Koszul. The converse direction, namely whether asymptotic universal Koszulity forces each $B_W$ to be universally Koszul, appears to require a descent principle for quadratic subalgebras and is not established here.

	\section{Compatible finite-type subalgebras}
	\label{sec:structural}
	
	The aim of this section is to establish a structural result about asymptotically universally Koszul algebras under a compatibility condition. Throughout, let $k$ be a field, and all graded algebras are assumed to be connected, generated in degree-$1$, and graded-commutative. 
	
	For the asymptotic framework to be useful, one would like finite-type universally Koszul pieces to pass to distinguished finitely generated subalgebras whenever the ambient approximation is sufficiently compatible.
	
	In general, universal Koszulity does not descend to arbitrary quadratic subalgebras. The following result isolates a natural situation where it does.
	
	\begin{lemma}
		\label{lem:quadratic-subalgebra-compatible}
		Let $C$ be a finite-type universally Koszul quadratic $k$-algebra. Assume that \[C_1 = W \oplus U\]
		as $k$-vector spaces, and that the quadratic relation space of \(C\) decomposes as
		\[R_C = R_W \oplus R_U,\quad \text{ with } \quad R_W \subseteq W \otimes W,\; R_U \subseteq U \otimes U.	\]
		Let $D := \langle W \rangle \subseteq C$ be the graded subalgebra generated by $W$. Then $D$ is a finite-type universally Koszul quadratic algebra.
	\end{lemma}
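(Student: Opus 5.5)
The plan is to show that $D$ is an \emph{algebra retract} of $C$, and then to pull back universal Koszulity along the retraction by comparing colon ideals in $D$ and in $C$.

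\textbf{Step 1: $D$ is quadratic.} Set $\overline D := T(W)/\langle R_W\rangle$. The inclusion $W\hookrightarrow C_1$ induces a graded algebra map $\iota\colon \overline D\to C$, which is well defined because $R_W\subseteq R_C$. Its image is $D=\langle W\rangle$.

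\textbf{Step 2: the retraction.} Define $\pi\colon T(W\oplus U)\to \overline D$ by sending $W$ identically to itself and $U$ to $0$. This kills $R_W$ by definition of $\overline D$, and kills $R_U\subseteq U\otimes U$ because $U\mapsto 0$. Since $R_C=R_W\oplus R_U$, the map $\pi$ descends to $\pi\colon C\to\overline D$. The composite $\pi\circ\iota$ is the identity on generators, hence on $\overline D$. Therefore $\iota$ is injective, and $D\cong \overline D=T(W)/\langle R_W\rangle$ is quadratic with $\dim_k D_1=\dim_k W<\infty$. We then regard $\pi$ as a surjective graded algebra map $C\to D$ that restricts to the identity on $D$.

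This step is the only place the splitting hypothesis enters, and I expect it to be the main point to check carefully. In particular, I must not assume that the commutativity relations between $W$ and $U$ are present: the hypothesis $R_C=R_W\oplus R_U$ is what ensures that sending $U\mapsto 0$ respects all relations.

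\textbf{Step 3: transferring universal Koszulity.} Let $J\in\mathcal L(D)$ and $x\in W\setminus J_1$. Put $I:=CJ$, the left ideal of $C$ generated by $J_1$, so $I\in\mathcal L(C)$.

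\emph{Degree $1$ and intersection with $D$.} Since $C_0=k$, we have $I_1=J_1$, hence $x\notin I_1$. Since $\pi$ is multiplicative, fixes $D$, and has $\pi(C)=D$, we get
\[
\pi(I)=\pi(C)\,\pi(J)=DJ=J.
\]
Consequently $I\cap D=J$: if $d\in I\cap D$, then $d=\pi(d)\in J$; the reverse inclusion is clear.

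\emph{Comparing colon ideals.} I claim $J:x=\pi(I:x)$.
\begin{itemize}[label=$\circ$]
\item If $a\in I:x$, then $\pi(a)x=\pi(ax)\in\pi(I)=J$, so $\pi(a)\in J:x$.
\item If $a\in J:x$, then $a\in D$ and $ax\in J\subseteq I$, so $a\in I:x$ and $a=\pi(a)$.
\end{itemize}

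\emph{Generation in degree $1$.} By universal Koszulity of $C$, we have $I:x=C\,(I:x)_1$. Applying $\pi$ gives
\[
J:x=\pi(C)\,\pi\bigl((I:x)_1\bigr)=D\,\pi\bigl((I:x)_1\bigr),
\]
and $\pi\bigl((I:x)_1\bigr)\subseteq D_1$. Hence $J:x\in\mathcal L(D)$.

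By Definition~\ref{def:UK}, $D$ is universally Koszul, and Step~2 shows it is of finite type and quadratic.
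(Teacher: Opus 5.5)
Your proof is correct and follows the same strategy as the paper: extend $J$ to $I=C\cdot J_1\in\mathcal L(C)$, apply universal Koszulity of $C$ to $I:_C x$, and descend back to $D$. The paper only asserts, ``because the presentation of $D$ involves only generators in $W$ and relations in $W\otimes W$'', that $D\cong T(W)/\langle R_W\rangle$ and that $J:_D x=(I:_C x)\cap D=D\cdot(M\cap W)$ with $M=(I:_C x)_1$; your retraction $\pi\colon C\to D$ is what actually proves both. Your $D\cdot\pi(M)$ agrees with the paper's $D\cdot(M\cap W)$, since $\pi(M)=(J:x)_1=M\cap W$.
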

	
	\begin{proof}
		Under the splitting assumption, there are no quadratic relations mixing $W$ and $U$, so $D$ identifies with the quadratic algebra 
		\[D\cong T(W)/\langle R_W\rangle.\]
		Let $J \in \mathcal{L}(D)$ and $y \in W \setminus J_1$. Let $I:= C\cdot J_1 \in \mathcal{L}(C)$. Since $C$ is universally Koszul, the colon ideal $I :_C y$ is generated in degree-$1$. Write $I:_C y = C\cdot M$, for some subspace $M\subseteq C_1$. Because the presentation of $D$ involves only generators in $W$ and relations in $W\otimes W$, one has 
		\[ J :_D y = (I :_C y) \cap D = D \cdot (M \cap W).\]
		Hence $J:_D y$ is generated in degree-$1$, implying that $D$ is universally Koszul.
	\end{proof}

	We now obtain a subalgebra theorem under a natural compatibility assumption.
	
	\begin{theorem}[Finite-type compatible subalgebras]
		\label{thm:finitely-generated-subalgebra}
		Let $A$ be an asymptotically universally Koszul algebra with filtered system $(A_i)_{i \in I}$ of finite-type universally Koszul quadratic subalgebras. Let $B \subseteq A$ be a connected graded subalgebra generated in degree-$1$ with $\dim_k B_1 < \infty$. Assume that there exists $i \in I$, such that:
		\begin{enumerate}[label=(\roman*)]
			\item $B_1 \subseteq (A_i)_1$,
			\item $(A_i)_1 = B_1 \oplus U$ for some subspace $U$,
			\item the quadratic relation space of $A_i$ decomposes as
			\[ R_{A_i} = R_B \oplus R_U, \quad \text{ with } \quad R_B\subseteq B_1\otimes B_1,\; R_U\subseteq U\otimes U. \] 
		\end{enumerate}
		Then $B$ is a finite-type universally Koszul quadratic algebra.
	\end{theorem}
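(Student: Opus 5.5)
The plan is to reduce the statement directly to Lemma~\ref{lem:quadratic-subalgebra-compatible}, applied to $C := A_i$ with $W := B_1$ and the complement $U$ supplied by hypothesis~(ii). First, $C = A_i$ meets the standing hypotheses of the lemma. By Definition~\ref{def:asymp}(i)--(ii), $A_i$ is a quadratic graded subalgebra of $A$ that is generated in degree~$1$, has $\dim_k (A_i)_1 < \infty$, and is universally Koszul. Since $A$ is graded-commutative under the standing conventions of this section, $A_i$ is graded-commutative as well. Hypotheses~(ii) and~(iii) give exactly the splitting $C_1 = W \oplus U$ and $R_C = R_W \oplus R_U$ with $R_W := R_B \subseteq W\otimes W$ and $R_U \subseteq U \otimes U$, which is what the lemma requires.

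The second step is to identify $B$ with the subalgebra $D = \langle W\rangle \subseteq C$ of the lemma. $B$ is generated in degree~$1$ by $B_1$, and by hypothesis~(i) we have $B_1 \subseteq (A_i)_1 \subseteq A_i$. Hence $B$ equals the subalgebra of $A$ generated by $B_1$. Since $A_i$ is a subalgebra of $A$ containing $B_1$, this is the same as the subalgebra of $A_i$ generated by $W = B_1$, namely $D$. Both are computed inside the common ambient algebra $A$, so no comparison of multiplications is needed. Lemma~\ref{lem:quadratic-subalgebra-compatible} then shows that $B = D$ is a finite-type universally Koszul quadratic algebra. Finite type is immediate from $\dim_k B_1 < \infty$.

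The main point needing care is the notation $R_B$ in hypothesis~(iii). It should be read as the relation space of $B$ under its presentation $T(B_1)/\langle R_B\rangle$; I would make this consistent as follows. The kernel of $B_1 \otimes B_1 \to B_2$ is $R_{A_i} \cap (B_1\otimes B_1)$, as in the formula $R_{B_W} = R_A\cap(W\otimes W)$ of Section~\ref{sec:local}, applied to $A_i$. The direct-sum decomposition in~(iii) together with $(B_1\otimes B_1)\cap(U\otimes U)=0$ gives $R_{A_i}\cap (B_1\otimes B_1) = R_B$: write an element $r = r_B + r_U$; if $r \in B_1\otimes B_1$, then $r_U \in (B_1 \otimes B_1) \cap (U \otimes U) = 0$. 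So the degree-$2$ relations of $B$ are exactly $R_B$, and the lemma's identification $D \cong T(W)/\langle R_W\rangle$ shows that $B$ is quadratic with relation space $R_B$. The theorem is thus a direct specialization of the lemma, with no further obstacle beyond this bookkeeping.
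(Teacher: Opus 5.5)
Your proposal is correct and follows the paper's own argument: apply Lemma~\ref{lem:quadratic-subalgebra-compatible} with $C=A_i$ and $W=B_1$, identify $B$ with $\langle B_1\rangle\subseteq A_i$, and get finite type from $\dim_k B_1<\infty$. Your extra check that $R_{A_i}\cap(B_1\otimes B_1)=R_B$ spells out the identification of the relation space of $B$, which the paper leaves implicit.
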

	
	\begin{proof}
		By assumption, $B$ is a quadratic subalgebra of $A_i$ satisfying the hypotheses of Lemma~\ref{lem:quadratic-subalgebra-compatible}. Since $A_i$ is finite-type universally Koszul, the lemma implies that $B$ is universally Koszul. Finite generation follows from $\dim_k B_1 < \infty$.
	\end{proof}
	
	\begin{remark}
		The assumptions of Theorem~\ref{thm:finitely-generated-subalgebra} are automatically satisfied when $B_1$ is a coordinate subspace of $(A_i)_1$, i.e.\ when $(A_i)_1$ admits a basis, such that $B_1$ is spanned by a subset and the relations respect this decomposition. This occurs in many natural constructions, including certain filtered systems arising in Galois cohomology.
	\end{remark}

	\section{Galois cohomology}
	\label{sec:galois}

	We now apply the preceding framework to continuous mod-\(p\) cohomology of pro-\(p\) groups and, in particular, to maximal pro-\(p\) Galois groups of fields. Since quadraticity and universal Koszul phenomena are governed by the degree-\(1\) and degree-\(2\) structure of the cohomology ring, low-degree compatibility along suitable quotient systems will be sufficient for our purposes.
	
	Throughout this section, let \(p\) be a prime. All cohomology groups below are continuous cohomology with coefficients in \(\mathbb{F}_p\). For a pro-\(p\) group \(G\), we write 
	\[H^\bullet(G):=H^\bullet(G,\mathbb{F}_p),\]
	viewed as a graded-commutative \(\mathbb{F}_p\)-algebra via the cup product. For a field \(K\), let \(G_K(p)\) denote the maximal pro-\(p\) quotient of the absolute Galois group of \(K\). For general background on group and Galois cohomology, pro-\(p\) Galois groups, and related Galois theory, see \cite{Brown,Koch,NeukirchSchmidtWingberg,SerreGaloisCohomology,SerreTopics}.
	
	\begin{definition}
		We say that a field \(K\) has \emph{asymptotically universally Koszul cohomology at \(p\)} if the graded algebra $H^\bullet(G_K(p),\mathbb{F}_p)$ is asymptotically universally Koszul.
	\end{definition}
	
	The first step is a low-degree colimit statement showing that, under suitable injectivity assumptions, the cohomology of \(G\) is recovered from its quotients in the range relevant to quadratic structure.
	
	\begin{lemma}\label{lem:cohom-colimit}
		Let \(G\) be a pro-\(p\) group and let \((N_i)_{i\in I}\) be a cofinal directed system of closed normal subgroups of \(G\), such that
		\[\bigcap_{i\in I} N_i = 1.\]
		Set \(G_i:=G/N_i\). Assume:
		\begin{enumerate}[label=(\roman*)]
			\item \(H^\bullet(G)\) is generated as an \(\mathbb{F}_p\)-algebra by \(H^1(G)\),
			\item for each \(i\in I\) and each \(n\in\{1,2\}\), the inflation map 
			\[\mathrm{inf}^n_{G_i}\colon H^n(G_i)\longrightarrow H^n(G)\]
			is injective.
		\end{enumerate}
		Then the inflation maps induce:
		\begin{enumerate}[label=(\alph*)]
			\item isomorphisms
			\[\varinjlim_{i\in I} H^n(G_i)\xrightarrow{\sim} H^n(G) \qquad \text{for } n=0,1,2;\]
			\item surjections
			\[ \varinjlim_{i\in I} H^n(G_i)\twoheadrightarrow H^n(G) \qquad \text{for all } n\ge 0,\]
			compatible with cup products.
		\end{enumerate}
		In particular, the induced homomorphism of graded \(\mathbb{F}_p\)-algebras
		\[\varinjlim_{i\in I} H^\bullet(G_i)\longrightarrow H^\bullet(G)\]
		is surjective and is an isomorphism in degrees \(\le 2\).
	\end{lemma}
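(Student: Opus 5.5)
The plan rests on the standard continuity theorem for profinite cohomology: if $G=\varprojlim G/N_i$ with $(N_i)$ a cofinal directed system of closed normal subgroups and $\bigcap_i N_i=1$, then inflation induces an isomorphism
\[\varinjlim_{i\in I} H^n(G_i,\mathbb F_p)\xrightarrow{\sim} H^n(G,\mathbb F_p)\qquad\text{for every } n\ge 0,\]
see \cite{NeukirchSchmidtWingberg} or \cite{SerreGaloisCohomology}. I will recall the mechanism rather than re-derive it. Continuous cochains $G^n\to\mathbb F_p$ with discrete finite target are locally constant. By compactness they factor through a finite quotient $G/U$ with $U$ open normal. Since $\bigcap N_i=1$, cofinality, and compactness, every such $U$ contains some $N_i$. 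Hence the cochain complex of $G$ is the directed union of the cochain complexes of the $G_i$. Cohomology commutes with filtered colimits of complexes, because filtered colimits of vector spaces are exact. This gives (a) for $n=0,1,2$ and in fact an isomorphism in every degree, so (b) follows a fortiori.

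The inflation maps are ring homomorphisms for the cup product, being induced by pullback of cochains along $G\to G_i$. They are also compatible with the transition inflations $H^\bullet(G_i)\to H^\bullet(G_j)$ for $i\le j$, since $N_j\subseteq N_i$. So the colimit $\varinjlim H^\bullet(G_i)$ is a graded-commutative $\mathbb F_p$-algebra, and the induced map to $H^\bullet(G)$ is a homomorphism of graded algebras. This gives the compatibility with cup products and the final "in particular" statement.

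If one prefers to use only the hypotheses as stated, surjectivity in all degrees has an alternative argument via hypothesis (i). Any class in $H^n(G)$ is a sum of cup products $x_1\cup\cdots\cup x_n$ with $x_r\in H^1(G)$. Each $x_r$ is a continuous homomorphism $G\to\mathbb F_p$, so its kernel is open. Therefore $x_r$ factors through some $G_{i_r}$, and by directedness all of them factor through a common $G_i$. Then the product is the inflation of the cup product of the lifted classes. Injectivity in degrees $1,2$ of the colimit map also follows directly. A class in the colimit is represented by some $c\in H^n(G_i)$, and if its image vanishes, hypothesis (ii) gives $c=0$ already. So hypothesis (ii) makes injectivity trivial, while surjectivity in degree $2$ requires the continuity theorem.

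The only genuine obstacle is surjectivity in degree $2$. Hypothesis (i) does not suffice for this without knowing that $H^2(G)$ is spanned by cup products of degree-$1$ classes, which is exactly what (i) asserts. So in fact (i) also covers degree $2$, and the cochain-level continuity argument serves as a backup. I would present the cup-product argument as the main proof and cite the continuity theorem as a remark confirming the isomorphism in all degrees.
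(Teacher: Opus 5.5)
Your proposal is correct. The argument you say you would present as the main proof is exactly the paper's:
\begin{enumerate}
\item degree~$1$ follows by factoring each continuous homomorphism $G\to\mathbb F_p$ through some $G_i$;
\item surjectivity in every degree follows by using (i) to write a class as a finite sum of cup products of degree-$1$ classes, all lifted to a common $G_{i_0}$;
\item injectivity of the colimit map in degrees $1,2$ is read off directly from (ii).
\end{enumerate}

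Your opening cochain-level argument is a genuinely different and stronger route. Every open normal subgroup contains some $N_i$, so the continuous cochain complex of $G$ is the directed union of those of the $G_i$. Exactness of filtered colimits then gives $\varinjlim_i H^n(G_i)\cong H^n(G)$ in \emph{every} degree, using neither (i) nor (ii). This shows the lemma's hypotheses are superfluous for its own conclusions; the continuity theorem the paper cites from \cite{NeukirchSchmidtWingberg} for degree~$1$ already holds in all degrees.

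The hypotheses only earn their keep in Proposition~\ref{prop:galois}. There, (ii) is needed in the stronger form that each individual inflation $H^2(G_i)\to H^2(G)$ is injective, in order to identify $H^\bullet(G_i)$ with $B_i$. The colimit isomorphism does not give this. Hypothesis (i) is needed there to get $H^\bullet(G)=\bigcup_i B_i$. The paper's route, in turn, stays within the stated hypotheses and needs only the elementary degree-$1$ factorization.

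One thing to clean up: your last paragraph first says surjectivity in degree~$2$ ``requires the continuity theorem'' and then retracts it. There is no obstacle there. Hypothesis (i) handles degree~$2$ exactly like every other degree, and the continuity theorem does not need (i) at all.
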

	
	\begin{proof}
		In degree \(0\), one has \(H^0(G_i)\cong \mathbb F_p\cong H^0(G)\) for all \(i\), so the colimit map is an isomorphism in degree \(0\).
		
		In degree \(1\), since \(G=\varprojlim_i G_i\), every continuous homomorphism \(G\to \mathbb{F}_p\) factors through some quotient \(G_i\). Hence 
		\[H^1(G)=\mathrm{Hom}_{\mathrm{cts}}(G,\mathbb{F}_p) =\varinjlim_i \mathrm{Hom}(G_i,\mathbb{F}_p) =\varinjlim_i H^1(G_i),\]
		see \cite[Prop.~1.5.1]{NeukirchSchmidtWingberg}.
		
		Now let \(n\ge 0\). Since \(H^\bullet(G)\) is generated as an algebra by \(H^1(G)\), every element of \(H^n(G)\) is a finite sum of cup-products of degree-$1$ classes. Each degree-$1$ factor lifts from some \(H^1(G_i)\), and because the system is directed, all factors occurring in a given finite sum lift simultaneously from some common \(G_{i_0}\). Compatibility of inflation with cup products then shows that the class lies in the image of \(H^n(G_{i_0})\to H^n(G)\). Thus for every \(n\ge 0\), the map 
		\[ \varinjlim_i H^n(G_i)\longrightarrow H^n(G)\]
		is surjective.
		
		In degrees \(1\) and \(2\), the maps \[ \mathrm{inf}^n_{G_i}\colon H^n(G_i)\to H^n(G) \]
		are injective by assumption. Hence the canonical maps from the filtered colimits are injective in degrees \(1\) and \(2\). Combined with the surjectivity, this yields isomorphisms
		\[\varinjlim_i H^n(G_i)\xrightarrow{\sim} H^n(G) \qquad (n=1,2).\]
		
		Finally, inflation is functorial and compatible with cup products, so the graded colimit map is a surjective homomorphism of graded algebras and an isomorphism in degrees \(\le 2\).
	\end{proof}
	
	We now use the low-degree control provided by Lemma~\ref{lem:cohom-colimit} together with quadraticity of the finite quotients, to obtain the following.
	
	\begin{proposition}\label{prop:galois}
		Let \(G\) be a pro-\(p\) group and let \((N_i)_{i\in I}\) be a cofinal directed system of closed normal subgroups of \(G\) with \(\bigcap_i N_i=1\). Set \(G_i:=G/N_i\). Assume:
		\begin{enumerate}[label=(\roman*)]
			\item \(H^\bullet(G)\) is generated as an \(\mathbb{F}_p\)-algebra by \(H^1(G)\),
			\item for each \(i\) and each \(n\in\{1,2\}\), the inflation map
			\[\mathrm{inf}^n_{G_i}\colon H^n(G_i)\longrightarrow H^n(G)\]
			is injective,
			\item for each \(i\), the algebra \(H^\bullet(G_i)\) is a finite-type universally Koszul \(\mathbb{F}_p\)-algebra.
		\end{enumerate}
		Then \(H^\bullet(G)\) is asymptotically universally Koszul.
	\end{proposition}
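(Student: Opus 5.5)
The plan is to realize \(H^\bullet(G)\) as a directed union of finite-type universally Koszul quadratic subalgebras and then invoke Proposition~\ref{prop:c-implies-a}. The candidate subalgebras are the images
\[A_i := \mathrm{inf}_{G_i}\bigl(H^\bullet(G_i)\bigr)\subseteq H^\bullet(G),\]
which form a directed system under inclusion. This is because for \(i\le j\) the inflation \(H^\bullet(G)\leftarrow H^\bullet(G_i)\) factors through \(H^\bullet(G_j)\), so \(A_i\subseteq A_j\). By Lemma~\ref{lem:cohom-colimit}(b), hypothesis (i) gives \(H^\bullet(G)=\bigcup_i A_i\). Condition (iv) of Definition~\ref{def:asymp} then follows exactly as in Proposition~\ref{prop:colim}: each element of a basis of a finite-dimensional \(W\subseteq H^1(G)\) lies in some \((A_i)_1\), and directedness lets us take a common upper bound. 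Condition (iii) holds because a directed union is the filtered colimit of its members.

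The substantive step is to show that each \(A_i\) is a finite-type universally Koszul quadratic algebra. Finite type is immediate, since \((A_i)_1\) is the image of the finite-dimensional space \(H^1(G_i)\). For the rest, I would prove that \(\mathrm{inf}_{G_i}\colon H^\bullet(G_i)\to A_i\) is an isomorphism, so that (ii) transfers from hypothesis (iii). Universal Koszulity is invariant under graded algebra isomorphisms, and a universally Koszul algebra is quadratic by Definition~\ref{def:UK}.

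This injectivity in all degrees is the main obstacle. Hypothesis (ii) only gives injectivity in degrees \(1\) and \(2\). My first attempt is to use the quadratic presentation \(H^\bullet(G_i)=T(V_i)/\langle R_i\rangle\) with \(V_i=H^1(G_i)\).

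1. Because \(\mathrm{inf}^1\) is injective, \((A_i)_1\cong V_i\).
2. Because \(\mathrm{inf}^2\) is injective, the kernel of \(V_i\otimes V_i\to H^2(G)\) is exactly \(R_i\).
3. Hence \(A_i\), which is generated by \((A_i)_1\), is a quotient of \(T(V_i)/\langle R_i\rangle\) agreeing with it in degrees \(\le 2\). What remains is to rule out extra relations in degree \(\ge 3\).

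If \(H^\bullet(G)\) is not known to be quadratic, this cannot follow from (ii) alone. I would then try one of two routes. The first is to exploit cofinality: any degree-\(n\) class killed by inflation to \(G\) is already killed in some \(H^n(G_j)\). The second is to argue that the relevant statement only needs \(A_i\) to be the quadratic algebra \(T(V_i)/\langle R_i\rangle\), which is exactly \(H^\bullet(G_i)\).

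Failing both, I would replace \(A_i\) by the canonical quadratic subalgebra \(B_{W}\) with \(W=(A_i)_1\), in the sense of Section~\ref{sec:local}. This assumes, as the paper's surrounding discussion suggests, that \(H^\bullet(G)\) is treated as quadratic. In that setting its relations in \(W\otimes W\) are \(R_i\) by step 2, so \(B_W\cong H^\bullet(G_i)\) is universally Koszul. Propositions~\ref{prop:b-implies-c} and~\ref{prop:c-implies-a}, restricted to the cofinal family \(W=\mathrm{inf}(H^1(G_i))\), then finish the argument.
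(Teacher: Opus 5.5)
You have correctly isolated the crux: everything reduces to showing that $\mathrm{inf}\colon H^\bullet(G_i)\to H^\bullet(G)$ is injective in degrees $\ge 3$, and hypothesis (ii) gives this only in degrees $1,2$. None of your three routes closes this gap.

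\emph{The cofinality route only relocates the problem.} By continuity of cohomology, $\ker\bigl(H^n(G_i)\to H^n(G)\bigr)=\bigcup_{j\ge i}\ker\bigl(H^n(G_i)\to H^n(G_j)\bigr)$. For $j\ge i$, the map $H^\bullet(G_i)\to H^\bullet(G_j)$ is again a map of universally Koszul algebras that is injective in degrees $1,2$, so you face the same question as before.

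\emph{The fallback is circular.} Since $H^\bullet(G_i)$ is generated by $H^1(G_i)$, the canonical subalgebra $B_W$ for $W=\mathrm{inf}(H^1(G_i))$ \emph{is} your $A_i$. So the claim $B_W\cong H^\bullet(G_i)$ is exactly the injectivity you could not prove.

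\emph{Quadraticity of the ambient algebra does not help.} In fact it holds: $\varinjlim_i H^\bullet(G_i)\to H^\bullet(G)$ is an isomorphism in all degrees by continuity, and filtered colimits of quadratic algebras are quadratic. But if $A=T(V)/\langle R\rangle$ and $W\subseteq V$, the relations of $\langle W\rangle_A$ are $T(W)\cap\langle R\rangle$. This can contain cubic elements outside $\langle R\cap W^{\otimes 2}\rangle$. For example, take $A=\bigwedge(x,y,u,z)/(xy-zu)$ and $W=\langle x,y,u\rangle$. The map $\bigwedge^2W\to A_2$ is injective, so the quadratic model of $\langle W\rangle_A$ is $\bigwedge(W)$. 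Yet $xyu=(zu)u=0$ in $A$. So the assertion at the start of Section~\ref{sec:local}, that $B_W$ is quadratic with relation space $R_A\cap(W\otimes W)$, fails in general, and your last step rests on it.

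The paper's proof follows your fallback exactly. It identifies the degree-$2$ relations of $B_i$ with the image of $R_i$, then concludes $H^\bullet(G_i)\cong B_i$ ``as quadratic algebras''. This tacitly assumes $B_i$ has no relations in degree $\ge 3$ beyond those generated in degree $2$. So your proposal coincides with the paper's argument, including this unjustified step.

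The example above is not universally Koszul; it is not even Koszul. So it does not refute the proposition. It does show that closing the gap needs an argument that genuinely uses hypothesis (iii): for instance, that inflation between universally Koszul cohomology rings which is injective in degrees $1$ and $2$ is injective in all degrees. Neither your proposal nor the paper supplies one.

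If you strengthen (ii) to injectivity of inflation in all degrees, your main route is complete as written. Then $A_i\cong H^\bullet(G_i)$, and Proposition~\ref{prop:colim} finishes the proof. Injective transition maps $H^\bullet(G_i)\to H^\bullet(G_j)$ alone would not suffice without also knowing that each map to $H^\bullet(G)$ is injective.
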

	
	\begin{proof}
		For each \(i\), let
		\[W_i:=\mathrm{inf}^1_{G_i}\bigl(H^1(G_i)\bigr)\subseteq H^1(G),\] and let
		\[B_i:=\langle W_i\rangle_{H^\bullet(G)}\]
		denote the graded subalgebra of \(H^\bullet(G)\) generated by \(W_i\). We claim that the inflation map induces an isomorphism of quadratic algebras
		\[H^\bullet(G_i)\xrightarrow{\sim} B_i.\]
		Indeed, since \(H^\bullet(G_i)\) is quadratic, it is generated by \(H^1(G_i)\), so its image is precisely \(B_i\). It remains to identify the quadratic relation space. In \(H^\bullet(G_i)\), the quadratic relations are
		\[R_i=\ker\!\Bigl(H^1(G_i)^{\otimes 2}\xrightarrow{\cup} H^2(G_i)\Bigr).\]
		Since inflation is injective on \(H^1\) and \(H^2\), and is compatible with cup products, the image of \(R_i\) inside \(W_i^{\otimes 2}\) is exactly
		\[\ker\!\Bigl(W_i^{\otimes 2}\xrightarrow{\cup} H^2(G)\Bigr),\]
		which is the quadratic relation space of \(B_i\). Hence $H^\bullet(G_i)\cong B_i$ as quadratic algebras. Therefore each \(B_i\) is a finite-type universally Koszul quadratic subalgebra of \(H^\bullet(G)\).
		
		Next, if \(N_j\subseteq N_i\), then the quotient map \(G_j\twoheadrightarrow G_i\) implies $W_i\subseteq W_j$, hence $B_i\subseteq B_j$. hus the family \((B_i)\) is filtered under inclusion.
		
		Finally, by Lemma~\ref{lem:cohom-colimit}, every class in \(H^1(G)\) comes from some \(H^1(G_i)\), so \[H^1(G)=\bigcup_i W_i.\]
		Since \(H^\bullet(G)\) is generated by \(H^1(G)\), every homogeneous element of \(H^\bullet(G)\) lies in some \(B_i\). Therefore \[H^\bullet(G)=\bigcup_i B_i.\] Hence \(H^\bullet(G)\) is a filtered union of finite-type universally Koszul quadratic subalgebras. By Proposition~\ref{prop:local}, it follows that \(H^\bullet(G)\) is asymptotically universally Koszul.
	\end{proof}

	\begin{remark}
		In the setting of Proposition~\ref{prop:galois}, injectivity of inflation in degree \(1\) is automatic. In degree \(2\), sufficient conditions can often be obtained from the five-term exact sequence associated to the Lyndon--Hochschild--Serre spectral sequence for $1\to N_i\to G\to G_i\to 1$. For example, it suffices that $H^1(N_i,\mathbb F_p)^{G_i}=0$. 
	\end{remark}

	We now apply Proposition~\ref{prop:galois} to a basic non-finite-type class of pro-$p$ groups, namely free pro-$p$ groups of arbitrary rank.
	
	\begin{proposition}
		Let \(F\) be a free pro-\(p\) group on an arbitrary set \(X\) of free generators. Then $H^\bullet(F,\mathbb{F}_p)$ is asymptotically universally Koszul.
	\end{proposition}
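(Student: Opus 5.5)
The plan is to apply Proposition~\ref{prop:galois} to the system of free quotients of \(F\) indexed by finite subsets of \(X\). First I recall the structure of the cohomology of a free pro-\(p\) group. Since \(F\) has cohomological dimension \(\le 1\), we have \(H^n(F)=0\) for \(n\ge 2\). Moreover,
\[H^1(F)=\mathrm{Hom}_{\mathrm{cts}}(F,\mathbb F_p).\]
Consequently \(H^\bullet(F)=\mathbb F_p\oplus H^1(F)\), with all cup products of degree-\(1\) classes vanishing. This algebra is generated by \(H^1(F)\), so hypothesis (i) of Proposition~\ref{prop:galois} holds.

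Next I build the quotient system. For each finite subset \(S\subseteq X\), let \(N_S\) be the closed normal subgroup generated by \(X\setminus S\) (and in the convergent-generating sense, together with all generators not converging to \(1\) outside \(S\)). Then \(G_S:=F/N_S\) is the free pro-\(p\) group \(F(S)\) on \(S\). The \(N_S\) form a directed system, since \(S\subseteq S'\) implies \(N_{S'}\subseteq N_S\).

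The step requiring the most care is \(\bigcap_S N_S=1\). I would argue this via the universal property: any open normal subgroup \(U\) of \(F\) contains all but finitely many generators, because a basis of a free pro-\(p\) group converges to \(1\). Hence \(U\supseteq N_S\) for \(S\) the finite set of generators outside \(U\). Taking the intersection over all open \(U\) then gives \(\bigcap_S N_S=1\), and this also shows cofinality. In degree \(1\), inflation is injective since \(F\to G_S\) is surjective. In degree \(2\), \(H^2(G_S)=0\), so injectivity is trivial. This gives hypothesis (ii).

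It remains to check hypothesis (iii). The algebra \(H^\bullet(G_S)=\mathbb F_p\oplus \mathbb F_p^{S}\) has zero products, so it is the quadratic algebra \(T(V)/\langle V\otimes V\rangle\) with \(V=\mathbb F_p^S\) of finite dimension. I show it is universally Koszul directly from Definition~\ref{def:UK}. Let \(I\in\mathcal L(A)\) and \(x\in A_1\setminus I_1\). Then \(I=I_1\), because \(A_{\ge 2}=0\). The colon ideal \(I:x\) contains all of \(A_{\ge 1}\), since \(ax=0\) for \(a\in A_1\). It does not contain \(1\), since \(x\notin I\). Hence \(I:x=A_1=A\cdot A_1\), which is generated in degree \(1\).

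Proposition~\ref{prop:galois} then yields that \(H^\bullet(F,\mathbb F_p)\) is asymptotically universally Koszul. Alternatively, one could bypass the group theory entirely: for every finite-dimensional \(W\subseteq H^1(F)\), the canonical subalgebra \(B_W=\mathbb F_p\oplus W\) is of the same trivial-product type, hence universally Koszul. Propositions~\ref{prop:b-implies-c} and~\ref{prop:c-implies-a} then conclude directly, which avoids the \(\bigcap N_S=1\) subtlety.
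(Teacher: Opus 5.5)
Your proposal is correct and follows essentially the paper's own proof. Like the paper, you apply Proposition~\ref{prop:galois} to the quotients $F\twoheadrightarrow F(S)$ for finite $S\subseteq X$, and you deduce $\bigcap_S N_S=1$ from the fact that every open normal subgroup contains almost all generators (the paper phrases this via a map to a finite $p$-group); the only difference is that you check universal Koszulity of $T(V)/\langle V\otimes V\rangle$ directly from Definition~\ref{def:UK} rather than citing it.

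Your alternative via Propositions~\ref{prop:b-implies-c} and~\ref{prop:c-implies-a} is also sound and slightly more robust, since it uses only the vanishing of $H^2(F)$ and not the convention that the basis $X$ converges to $1$.
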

	
	\begin{proof}
		Let \(I\) be the directed poset of finite subsets \(S\subseteq X\), ordered by inclusion. For each
		\(S\in I\), let \(F_S\) denote the free pro-\(p\) group on the finite set \(S\), and let
		\[\rho_S\colon F\twoheadrightarrow F_S\] be the continuous epimorphism determined by
		\[\rho_S(x)=\begin{cases} x,& x\in S,\\ 1,& x\in X\setminus S. \end{cases} \]
		Set
		\[N_S:=\ker(\rho_S).\]
		Then each \(N_S\) is a closed normal subgroup of \(F\), and
		\[F/N_S \cong F_S.\]
		
		We verify the hypotheses of Proposition~\ref{prop:galois}. First, the family \((N_S)_{S\in I}\) is directed under reverse inclusion: if \(S\subseteq T\), then \(\rho_S\) factors through \(\rho_T\), hence $N_T\subseteq N_S.$ Moreover, \[ \bigcap_{S\in I} N_S = 1. \]
		Indeed, let \(g\in F\) be nontrivial. Since \(F\) is free pro-\(p\) on \(X\), there exists a continuous homomorphism from \(F\) to a finite \(p\)-group \(P\) whose value on \(g\) is nontrivial. As \(P\) is finite, only finitely many generators in \(X\) have nontrivial image. Let \(S\subseteq X\) be the finite set of such generators. Then this homomorphism factors through \(\rho_S\), so \(\rho_S(g)\neq 1\). Hence \(g\notin N_S\), proving that \(\bigcap_{S\in I}N_S=1\).
		
		Next, since \(F\) is a free pro-\(p\) group, it has cohomological dimension \(1\), thus $	H^n(F,\mathbb F_p)=0 \qquad (n\geq 2)$, so \(H^\bullet(F,\mathbb F_p)\) is generated as an \(\mathbb{F}_p\)-algebra by \(H^1(F,\mathbb{F}_p)\).
		
		For each finite subset $S \subseteq X$, the quotient $F_S$ is a
		finitely-generated free pro-$p$ group, hence also has cohomological
		dimension $1$. Thus
		\[
		H^n(F_S,\mathbb{F}_p)=0 \qquad (n\geq 2).
		\]
		Writing $V_S:=H^1(F_S,\mathbb{F}_p)$, it follows that
		\[
		H^\bullet(F_S,\mathbb{F}_p)
		\cong T(V_S)/\langle V_S\otimes V_S\rangle,
		\]
		the trivial quadratic algebra on the finite-dimensional vector space
		$V_S$. In particular, $H^\bullet(F_S,\mathbb{F}_p)$ is a finite-type
		universally Koszul $\mathbb{F}_p$-algebra; see \cite{Quadrelli}.
		
		Finally, inflation is automatically injective in degrees \(1\) and \(2\). Thus all hypotheses of Proposition~\ref{prop:galois} are satisfied for the cofinal directed system \((N_S)_{S\in I}\). Hence $H^\bullet(F,\mathbb{F}_p)$ is asymptotically universally Koszul.
	\end{proof}

	Applying the preceding Proposition with \(F=G_K(p)\), we obtain
	\begin{corollary}[Fields with free maximal pro-\(p\) Galois group]
		Let \(K\supset \mu_p\) be a field, and assume that \(G_K(p)\) is a free pro-\(p\) group of arbitrary rank. Then \(K\) has asymptotically universally Koszul cohomology at \(p\).
	\end{corollary}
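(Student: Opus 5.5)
The plan is to apply the preceding proposition on free pro-$p$ groups directly, with $F := G_K(p)$. By hypothesis $G_K(p)$ is free pro-$p$ on some set $X$ of free generators, which may be infinite. That proposition then gives that $H^\bullet(G_K(p),\mathbb{F}_p)$ is asymptotically universally Koszul. This is exactly the definition of $K$ having asymptotically universally Koszul cohomology at $p$.

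The steps, in order, are as follows. First, fix a basis $X$ of $G_K(p)$ as a free pro-$p$ group. Second, invoke the free pro-$p$ proposition verbatim. Its proof supplies the cofinal directed system $(N_S)_{S}$ indexed by finite subsets $S\subseteq X$, with $\bigcap_S N_S=1$. Each quotient $G_K(p)/N_S\cong F_S$ has cohomology equal to the trivial quadratic algebra on $H^1(F_S,\mathbb{F}_p)$, which is finite-type and universally Koszul. Since $\mathrm{cd}\,F_S\le 1$ and $\mathrm{cd}\,G_K(p)\le 1$, the injectivity of inflation in degree $2$ holds trivially: both sides are zero. Third, Proposition~\ref{prop:galois} assembles these pieces into the filtered system required by Definition~\ref{def:asymp}.

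The hypothesis $\mu_p\subset K$ plays no role in the argument beyond fixing the arithmetic context, and the rank of the free group is irrelevant. No step is a genuine obstacle, since the corollary is a direct specialization.

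The one point I would double-check is the degenerate case where $K(p)=K$, so that $G_K(p)$ is trivial. Here $X=\emptyset$, the index poset has the single element $S=\emptyset$, and the cohomology algebra is $\mathbb{F}_p$ concentrated in degree $0$. This algebra is trivially universally Koszul: $\mathcal{L}(A)$ contains only the zero ideal and $A_1=0$, so there is nothing to check. The conclusion therefore still holds in this case.
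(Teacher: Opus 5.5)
Your proposal is correct and is the same as the paper's proof, which obtains the corollary by applying the free pro-$p$ proposition with $F=G_K(p)$. Your extra check of the degenerate case $G_K(p)=1$ (where $X=\emptyset$ and the cohomology is $\mathbb{F}_p$ in degree $0$, so universal Koszulity holds vacuously) is correct and harmless.
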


We next give an example arising from compact $\mathbb Z_p$-Lie algebras.

\begin{example}[Pro-uniform pro-$p$ groups]
	Let $p$ be an odd prime, and let
	\[
	L=\varprojlim_i L_i
	\]
	be a compact $\mathbb Z_p$-Lie algebra, where each $L_i$ has finite $\mathbb Z_p$-rank and the transition maps $L_i\to L_j$ are surjective. Let $\Phi_h$ denote the group law induced by the Baker--Campbell--Hausdorff formula, and set
	\[
	G=(pL,\Phi_h).
	\]
	Then $G$ is a pro-uniform pro-$p$ group, obtained as an inverse limit of uniform pro-$p$ groups associated to the finite-rank $\mathbb Z_p$-Lie algebras $pL_i$. For background on analytic and uniform pro-$p$ groups and their Lie-theoretic description, see \cite{DixonDuSautoyMannSegal}. Its mod-$p$ cohomology algebra is
	\[
	H^\bullet(G,\mathbb F_p)\cong \bigwedge(V),
	\]
	where
	\[
	V=(G/G^p)^\vee \cong (L/pL)^\vee.
	\]
	It follows from Corollary~3.3 that $H^\bullet(G,\mathbb F_p)$ is asymptotically universally Koszul.
\end{example}

	A natural question is whether the finite-type pieces appearing in an asymptotically universally Koszul cohomology algebra can be chosen in a group-theoretically meaningful way.
	
	The next result shows that, whenever inflation is injective, the cohomology of such a quotient appears canonically as a finite-type quadratic subalgebra generated by its degree-\(1\) image. 
	
	\begin{theorem}[Finite-type capture for quotient images]\label{thm:quotient-obstruction}
		Let \(G\) be a pro-\(p\) group, such that \(H^\bullet(G)\) is asymptotically universally Koszul via a filtered system \((A_i)_{i\in I}\) of finite-type universally Koszul quadratic subalgebras with
		\[\varinjlim_{i\in I} A_i \cong H^\bullet(G).\]
		Let \(\pi\colon G\twoheadrightarrow Q\) be a continuous epimorphism, where \(Q\) is a finitely generated pro-\(p\) group, such that \(H^\bullet(Q)\) is quadratic. Let
		\[\pi^*\colon H^\bullet(Q)\longrightarrow H^\bullet(G)\]
		denote the inflation, and assume that \(\pi^*\) is injective. Set
		\[W:=\pi^*(H^1(Q))\subseteq H^1(G),\]
		and let
		\[B_W:=\langle W\rangle_{H^\bullet(G)}\]
		be the graded subalgebra generated by \(W\).
		Then:
		\begin{enumerate}
			\item there exists \(i\in I\), such that \(W\subseteq (A_i)_1\),
			\item one has \(B_W\subseteq A_i\),
			\item the image \(\pi^*(H^\bullet(Q))\) is equal to \(B_W\);
			\item under the identification \(H^1(Q)\cong W\) induced by \(\pi^*\), the quadratic relation space of \(B_W\) is the image of the quadratic relation space of \(H^\bullet(Q)\). In particular,
			\[\pi^*\colon H^\bullet(Q)\xrightarrow{\sim} B_W\] is an isomorphism of quadratic algebras.
		\end{enumerate}
	\end{theorem}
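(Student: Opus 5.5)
The four assertions are essentially independent, and I will prove them in the order stated, using condition (iv) of Definition~\ref{def:asymp}, the subalgebra property of the $A_i$, and the same relation-space comparison as in the proof of Proposition~\ref{prop:galois}.

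For (1), I use that $Q$ is finitely generated pro-$p$. Hence $H^1(Q)=\mathrm{Hom}_{\mathrm{cts}}(Q,\mathbb F_p)$ is finite-dimensional, and so is $W=\pi^*(H^1(Q))$. Condition (iv) of Definition~\ref{def:asymp} applied to the given filtered system $(A_i)$ then yields an index $i$ with $W\subseteq (A_i)_1$.

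For (2), $A_i$ is a graded subalgebra of $H^\bullet(G)$ containing $W$. The subalgebra $B_W$ is the smallest such subalgebra, so $B_W\subseteq A_i$.

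For (3), quadraticity of $H^\bullet(Q)$ means it is generated by $H^1(Q)$. Since $\pi^*$ is a homomorphism of graded algebras (inflation is compatible with cup products), its image is the subalgebra generated by $\pi^*(H^1(Q))=W$, namely $B_W$.

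For (4), I compare relation spaces directly, exactly as in the proof of Proposition~\ref{prop:galois}. Write $R_Q=\ker\bigl(H^1(Q)^{\otimes 2}\xrightarrow{\cup}H^2(Q)\bigr)$ and $R_{B_W}=\ker\bigl(W^{\otimes2}\xrightarrow{\cup}H^2(G)\bigr)$. The map $\pi^*\otimes\pi^*\colon H^1(Q)^{\otimes2}\to W^{\otimes 2}$ is an isomorphism because $\pi^*$ is injective in degree $1$. The identity $\pi^*(a)\cup\pi^*(b)=\pi^*(a\cup b)$, together with injectivity of $\pi^*$ in degree $2$, shows that a tensor lies in $R_Q$ if and only if its image lies in $R_{B_W}$. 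Hence $(\pi^*\otimes\pi^*)(R_Q)=R_{B_W}$.

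It remains to get the isomorphism $H^\bullet(Q)\cong B_W$. Since $H^\bullet(Q)$ is quadratic and $\pi^*$ is injective in all degrees, (3) already makes $\pi^*$ a bijective algebra map onto $B_W$. The relation computation above then shows that $B_W$ is presented by $T(W)/\langle R_{B_W}\rangle$, so $B_W$ is itself quadratic and the isomorphism is one of quadratic algebras.

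The main point of care is (4). One must not assume a priori that $B_W$ is quadratic; an arbitrary subalgebra generated in degree $1$ could have higher-degree relations. Quadraticity of $B_W$ has to be deduced from the full injectivity of $\pi^*$, not merely injectivity in degrees $\le 2$. Note also that (2) and (4) do not make $B_W$ universally Koszul: universal Koszulity need not descend from $A_i$ to the subalgebra $B_W$ without a compatibility hypothesis like that of Theorem~\ref{thm:finitely-generated-subalgebra}. The statement, correctly, does not claim this.
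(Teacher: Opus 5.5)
Your proof is correct and follows the paper's argument essentially step by step. Both use finite-dimensionality of $W$ with condition (iv) for (1), minimality of the generated subalgebra together with generation of $H^\bullet(Q)$ in degree $1$ for (2)--(3), and the degree-$2$ kernel comparison via injectivity of $\pi^*$ in degrees $1$ and $2$ for (4); your explicit use of full injectivity of $\pi^*$ to conclude that $B_W$ is itself quadratic is a small point of care that the paper leaves implicit.
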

	
	\begin{proof}
		Since \(Q\) is finitely generated, the vector space \(H^1(Q)\) is finite-dimensional. Hence \(W\subseteq H^1(G)\) is finite-dimensional. Because \(H^\bullet(G)\) is asymptotically universally Koszul, there exists \(i\in I\), such that $W\subseteq (A_i)_1$. This proves (1).
		
		Since $H^\bullet(Q)$ is quadratic, it is generated as an algebra by $H^1(Q)$, thus 
		\[\pi^*(H^\bullet(Q)) = \left\langle \pi^* (H^1(Q))\right\rangle_{H^\bullet(G)}=\langle W \rangle_{H^\bullet(G)}=B_W.\]
		Because $W\subseteq (A_i)_1$, one has $B_W\subseteq A_i$, thus proving (2) and (3). 
		
		It remains to identify the quadratic relations. Since $H^\bullet(Q)$ is quadratic, its relation space is 
		\[R_Q= \ker\!\Bigl(H^1(Q)^{\otimes 2}\xrightarrow{\cup} H^2(Q)\Bigr).\]
		Since \(\pi^*\) is injective on \(H^1(Q)\) and \(H^2(Q)\) and compatible with cup products, the image of \(R_Q\) inside \(W^{\otimes 2}\) is exactly
		\[\ker\!\Bigl(W^{\otimes 2}\xrightarrow{\cup} H^2(G)\Bigr),\]
		which is the quadratic relation space of \(B_W\). Thus $ \pi^*\colon H^\bullet(Q)\xrightarrow{\sim} B_W$ is an isomorphism of quadratic algebras.
	\end{proof}

	\begin{remark}
		Theorem~\ref{thm:quotient-obstruction} shows that whenever  $G \twoheadrightarrow Q$ and inflation is injective, $H^\bullet(Q)$ shows up canonically inside $H^\bullet(G)$ as the quadratic subalgebra generated by the image of $H^1(Q)$. Thus finitely-generated quotients of $G$ provide finite-type quadratic pieces of the ambient cohomology algebra. This perspective may be useful when one attempts to establish asymptotic universal Koszulity via suitable families of quotients.
	\end{remark}

	\begin{corollary}
		Let \(G\) be a pro-\(p\) group such that \(H^\bullet(G)\) is asymptotically universally Koszul. Let
		\[\{\pi_\lambda \colon G \twoheadrightarrow Q_\lambda\}_{\lambda\in\Lambda}\]
		be a family of continuous epimorphisms onto finitely generated pro-\(p\) groups such that, for each \(\lambda\in\Lambda\),
		\begin{enumerate}
			\item[\textrm{(i)}] \(H^\bullet(Q_\lambda)\) is quadratic, and
			\item[\textrm{(ii)}] the inflation map
			\[\pi_\lambda^* \colon H^\bullet(Q_\lambda)\longrightarrow H^\bullet(G)\]
			is injective.
		\end{enumerate}
		Then, for every \(\lambda\in\Lambda\), the image \(\pi_\lambda^*(H^\bullet(Q_\lambda))\) is a finite-type quadratic subalgebra of \(H^\bullet(G)\), canonically generated by the image of \(H^1(Q_\lambda)\).
	\end{corollary}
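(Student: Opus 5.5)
The corollary should follow by applying Theorem~\ref{thm:quotient-obstruction} to each epimorphism separately. Fix \(\lambda\in\Lambda\). The hypotheses of that theorem need checking one at a time:
\begin{enumerate}
\item[(a)] \(H^\bullet(G)\) is asymptotically universally Koszul, so by Definition~\ref{def:asymp} it comes with a filtered system \((A_i)_{i\in I}\) of finite-type universally Koszul quadratic subalgebras and an isomorphism \(\varinjlim_i A_i\cong H^\bullet(G)\).
\item[(b)] \(Q_\lambda\) is a finitely generated pro-\(p\) group, and \(\pi_\lambda\) is a continuous epimorphism.
\item[(c)] \(H^\bullet(Q_\lambda)\) is quadratic by (i).
\item[(d)] \(\pi_\lambda^*\) is injective by (ii).
\end{enumerate}
The choice of the system \((A_i)\) plays no role in the conclusion, so any witness to asymptotic universal Koszulity will do.

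Next, set \(W_\lambda:=\pi_\lambda^*(H^1(Q_\lambda))\). Part (3) of Theorem~\ref{thm:quotient-obstruction} gives \(\pi_\lambda^*(H^\bullet(Q_\lambda))=B_{W_\lambda}=\langle W_\lambda\rangle_{H^\bullet(G)}\). So the image is the subalgebra generated by the image of \(H^1(Q_\lambda)\). It is canonical because it depends only on \(\pi_\lambda\) and not on the auxiliary system \((A_i)\).

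Part (4) shows that \(\pi_\lambda^*\colon H^\bullet(Q_\lambda)\to B_{W_\lambda}\) is an isomorphism of quadratic algebras, so the image is quadratic. For finite type, note that \(Q_\lambda\) is finitely generated, so \(H^1(Q_\lambda)=\mathrm{Hom}_{\mathrm{cts}}(Q_\lambda,\mathbb F_p)\) is finite-dimensional. Injectivity of \(\pi_\lambda^*\) in degree~\(1\) then gives \(\dim W_\lambda=\dim H^1(Q_\lambda)<\infty\).

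Nothing here is a real obstacle; the only care needed is bookkeeping. The one point worth stating explicitly is that finite type means finite-dimensional degree-\(1\) component. This comes from the finite generation of \(Q_\lambda\), not from containment in some \(A_i\). Parts (1)--(2) of the theorem add only that the image lies inside some \(A_i\), which can be mentioned but is not needed.
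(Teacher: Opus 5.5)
Your proposal is correct and follows the paper's proof: it applies Theorem~\ref{thm:quotient-obstruction} to each $\pi_\lambda$, reads off that the image equals $\langle \pi_\lambda^*(H^1(Q_\lambda))\rangle_{H^\bullet(G)}$ with the transported quadratic structure, and gets finite type from the finite generation of $Q_\lambda$. Your remark that finite type comes from $\dim H^1(Q_\lambda)<\infty$, and not from containment in some $A_i$, is accurate and slightly more explicit than the paper's one-line argument.
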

	
	\begin{proof}
		Apply Theorem~\ref{thm:quotient-obstruction} to each quotient map $\pi_\lambda \colon G\twoheadrightarrow Q_\lambda$. For each \(\lambda\), the theorem identifies $	\pi_\lambda^*(H^\bullet(Q_\lambda))$ with the quadratic subalgebra of \(H^\bullet(G)\) generated by the image of \(H^1(Q_\lambda)\). Since \(Q_\lambda\) is finitely-generated, this subalgebra is finite-type.
	\end{proof}

	We conclude this section with a basic finite-type arithmetic example illustrating the terminology introduced above.
	
	\begin{example}[Finite-type case: local fields]
		Let \(p\) be a prime and let \(K\) be a nonarchimedean local field containing \(\mu_p\), and set $	G:=G_K(p)$. Then \(G\) is a finitely generated pro-\(p\) group of cohomological dimension \(2\), and \(H^\bullet(G,\mathbb{F}_p)\) is a finite-type quadratic algebra. In many such cases it is known to be universally Koszul. Hence, whenever \(H^\bullet(G_K(p),\mathbb{F}_p)\) is universally Koszul, it is automatically asymptotically universally Koszul. Thus every local field \(K\supseteq \mu_p\) whose maximal pro-\(p\) Galois cohomology is universally Koszul gives a basic example of a field with asymptotically universally Koszul cohomology at \(p\).
	\end{example}

	\section{Conditional local-global descent}
	
	We now formulate a conditional local-global mechanism for establishing asymptotic universal Koszulity. The key idea is that if finite-dimensional collections of global degree-$1$ classes can be realized inside finite-type universally Koszul local models in a compatible way, then the global cohomology ring inherits asymptotic universal Koszulity.

	\begin{theorem}[Conditional local-global descent criterion] \label{thm:local-global-descent}
		Let $K \supset \mu_p$ be a field, and set
		\[A_K^\bullet := H^\bullet(G_K(p),\mathbb{F}_p).\]
		Assume that for every finite-dimensional subspace $W \subset A_K^1$, there exist:
		\begin{enumerate}[label=(\roman*)]
			\item a finite set of places or local data $S=S(W)$,
			\item a graded algebra homomorphism
			\[\lambda_S \colon A_K^\bullet \longrightarrow A_S^\bullet := \prod_{v\in S} H^\bullet(G_{K_v}(p),\mathbb F_p), \]
			\item a finite-type universally Koszul quadratic algebra $B_W$,
			\item an isomorphism $\phi_W \colon \langle W\rangle \xrightarrow{\ \sim\ } B_W$,
		\end{enumerate}
		such that:
		\begin{enumerate}[label=\textnormal{(\alph*)}]
			\item $\phi_W$ is induced by $\lambda_S$,
			\item the family is compatible under inclusion, that is, 
			\[W \subset W' \Rightarrow \langle W\rangle \subset \langle W'\rangle, \qquad B_W \subset B_{W'}.\]
		\end{enumerate}
		Then $A_K^\bullet$ is a filtered union of finite-type universally Koszul quadratic subalgebras, and hence asymptotically universally Koszul.
	\end{theorem}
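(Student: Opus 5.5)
The plan is to reduce the statement to Propositions~\ref{prop:b-implies-c}/\ref{prop:c-implies-a}, or directly to Proposition~\ref{prop:local}, applied to the family of canonical subalgebras $\langle W\rangle\subseteq A_K^\bullet$ indexed by the finite-dimensional subspaces $W\subseteq A_K^1$. I will take the poset $\mathcal W$ of finite-dimensional subspaces of $A_K^1$, ordered by inclusion. As in the proof of Proposition~\ref{prop:b-implies-c}, this poset is directed, since $W+W'$ is again finite-dimensional. The key point is that each $\langle W\rangle$ is a finite-type universally Koszul quadratic subalgebra. Once this is known, the rest of the argument is formal.

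\textbf{Step 1.} For fixed $W$, transport structure along $\phi_W$. Hypothesis (iv) says $\phi_W\colon\langle W\rangle\to B_W$ is an isomorphism of graded algebras, and by (a) it is the restriction of $\lambda_S$. Universal Koszulity of Definition~\ref{def:UK} is phrased entirely in terms of graded left ideals generated in degree~$1$ and colon ideals. It is therefore invariant under graded algebra isomorphisms, because $\phi_W$ induces a bijection $\mathcal L(\langle W\rangle)\to\mathcal L(B_W)$ and satisfies $\phi_W(I:x)=\phi_W(I):\phi_W(x)$. Quadraticity and $\dim(\,\cdot\,)_1<\infty$ are likewise preserved. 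Hence $\langle W\rangle$ is a finite-type universally Koszul quadratic algebra, with $(\langle W\rangle)_1=W$.

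\textbf{Step 2.} Filteredness. By (b), or simply because subalgebra generation is monotone, $W\subseteq W'$ implies $\langle W\rangle\subseteq\langle W'\rangle$. Combined with directedness of $\mathcal W$, this shows the family $(\langle W\rangle)_{W\in\mathcal W}$ is filtered under inclusion.

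\textbf{Step 3.} Exhaustion. It is standard (Merkurjev--Suslin/Voevodsky, since $\mu_p\subset K$) that $A_K^\bullet$ is generated in degree~$1$; this is also implicit in treating it as a quadratic algebra. Granting this, every homogeneous element of $A_K^\bullet$ is a finite sum of products of finitely many degree-$1$ classes, and so lies in some $\langle W\rangle$. Thus $A_K^\bullet=\bigcup_W\langle W\rangle$ as a directed union. Proposition~\ref{prop:c-implies-a}, or equivalently Proposition~\ref{prop:local} with $B_W$ replaced by $\langle W\rangle$, then gives asymptotic universal Koszulity.

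The only step with real content is Step~1: making sure that ``universally Koszul'' really transfers across $\phi_W$. I would check the two details this requires. First, the image under $\phi_W$ of a degree-$1$-generated left ideal is again degree-$1$-generated. Second, colon ideals correspond under $\phi_W$. Both follow from bijectivity and gradedness. I note that the local data $S$ and $\lambda_S$ play no role beyond furnishing $\phi_W$; condition (a) is not needed logically. Finally, the compatibility of the $B_W$ in (b) is also unnecessary, because filteredness already holds on the side of $A_K^\bullet$.
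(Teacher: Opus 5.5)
Your proposal is correct and matches the paper's proof: transfer universal Koszulity to $\langle W\rangle$ along $\phi_W$, get a filtered family of the $\langle W\rangle$ (the paper cites compatibility (b); as you note, monotonicity of generation makes this automatic), use generation in degree~$1$ to write $A_K^\bullet=\bigcup_W\langle W\rangle$, and conclude via Proposition~\ref{prop:local}. Your explicit check that universal Koszulity is invariant under graded isomorphisms, and your attribution of degree-$1$ generation to the Merkurjev--Suslin/Voevodsky theorems, fill in points the paper leaves tacit.
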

	
	\begin{proof}
		For each $W$, let $C_W := \langle W\rangle$. By assumption, $C_W \cong B_W$, hence $C_W$ is finite-type and universally Koszul. Compatibility implies that $(C_W)$ is a filtered system. Since $A_K^\bullet$ is generated in degree $1$, every element lies in some $C_W$, and thus
		\[A_K^\bullet = \bigcup_W C_W.\]
		Hence $A_K^\bullet$ is a filtered colimit of finite-type universally Koszul quadratic algebras. The result follows from Proposition~\ref{prop:local}.
	\end{proof}

	The preceding theorem reduces asymptotic universal Koszulity to a finite-dimensional detection problem: each finite collection of global degree-$1$ classes must be recoverable inside a finite-type universally Koszul local model, and these models must be compatible under enlargement.

	\begin{corollary}
		\label{cor:finite-local-product-descent}
		Let $K \supset \mu_p$ be a field. Assume that for every finite-dimensional subspace $W \subseteq H^1(G_K(p),\mathbb{F}_p)$ there exists a finite set $S$ and a map
		\[\lambda_S \colon H^\bullet(G_K(p),\mathbb{F}_p) \longrightarrow \prod_{v\in S} H^\bullet(G_{K_v}(p),\mathbb{F}_p),\]
		such that:
		\begin{enumerate}[label=(\roman*)]
			\item $\lambda_S$ is injective on $\langle W\rangle$,
			\item $\lambda_S(\langle W\rangle)$ is finite-type universally Koszul,
			\item compatibility holds under inclusion of subspaces.
		\end{enumerate}
		Then $H^\bullet(G_K(p),\mathbb F_p)$ is asymptotically universally Koszul.
	\end{corollary}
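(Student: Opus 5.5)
The plan is to derive the corollary directly from Theorem~\ref{thm:local-global-descent}. For each finite-dimensional $W\subseteq H^1(G_K(p),\mathbb F_p)$ we already have a finite set $S=S(W)$ and a graded algebra map $\lambda_S$. We choose the auxiliary data of the theorem as follows: put
\[B_W:=\lambda_S(\langle W\rangle)\subseteq \prod_{v\in S} H^\bullet(G_{K_v}(p),\mathbb F_p),\]
and let $\phi_W:=\lambda_S|_{\langle W\rangle}\colon \langle W\rangle\to B_W$.

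First, $\phi_W$ is an isomorphism. By hypothesis (i), $\lambda_S$ is injective on $\langle W\rangle$, and $\phi_W$ is surjective onto $B_W$ by definition. Since $\lambda_S$ is a graded algebra homomorphism, $\phi_W$ is a graded algebra isomorphism. Hence hypothesis (iv) of the theorem holds, and condition (a) holds by construction. Hypothesis (ii) of the corollary says that $B_W$ is a finite-type universally Koszul algebra. It is quadratic because it is isomorphic to $\langle W\rangle$, which is quadratic with relation space $R_A\cap(W\otimes W)$ as noted in Section~\ref{sec:local}. This gives hypothesis (iii) of the theorem.

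The main obstacle is condition (b), which requires $B_W\subseteq B_{W'}$ whenever $W\subseteq W'$. The difficulty is that $S(W)$ and $S(W')$ may differ, so $B_W$ and $B_{W'}$ may live in different products. I would read hypothesis (iii) of the corollary (``compatibility under inclusion'') as providing exactly what is needed: the maps $\lambda_{S(W)}$ and $\lambda_{S(W')}$ are compatible, for instance through projection of the larger product of local factors onto the smaller one, or through an embedding of local models. With that reading, $\lambda_{S(W')}(\langle W\rangle)$ is identified with $\lambda_{S(W)}(\langle W\rangle)$, and so $B_W\subseteq B_{W'}$.

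This step matters little for the conclusion, however. The proof of Theorem~\ref{thm:local-global-descent} only uses the subalgebras $C_W=\langle W\rangle\subseteq A_K^\bullet$, and these are automatically filtered under inclusion. Once each $C_W\cong B_W$ is known to be finite-type, quadratic and universally Koszul, Propositions~\ref{prop:b-implies-c} and~\ref{prop:c-implies-a} (or Proposition~\ref{prop:local}) give the conclusion directly. Universal Koszulity transfers along the algebra isomorphism $\phi_W$. I would therefore either cite the theorem with (b) supplied by hypothesis (iii), or bypass (b) by this direct argument. Either way, $H^\bullet(G_K(p),\mathbb F_p)$ is asymptotically universally Koszul.
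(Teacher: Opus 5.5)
Your proposal is correct and matches the intended derivation: the corollary is Theorem~\ref{thm:local-global-descent} applied with $B_W:=\lambda_S(\langle W\rangle)$ and $\phi_W:=\lambda_S|_{\langle W\rangle}$. Your remark that condition (b) is dispensable is also right, since $\langle W\rangle\subseteq\langle W'\rangle$ holds automatically and only the $C_W=\langle W\rangle$ enter the argument.

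One small correction concerns why $B_W$ is quadratic. Take this from hypothesis (ii), because universal Koszulity is only defined for quadratic algebras. Do not take it from the Section~\ref{sec:local} assertion that $\langle W\rangle$ is automatically quadratic with relations $R_A\cap(W\otimes W)$, since that assertion fails in general. For example, in $\mathbb{F}_2[x,y,z]/(xy+z^2,\,xz)$ the subalgebra generated by $x,y$ satisfies $x^2y=xz^2=0$ but has no quadratic relations.
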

	
	We now outline how the preceding result would apply to patching by formulating a criterion that isolates the local-global assumptions required to deduce asymptotic universal Koszulity via patching. For the patching and local-global framework underlying this discussion, see \cite{HarbaterHartmannKrashenLGG,HarbaterHartmannKrashenLGcohom,HarbaterHartmannPatching}. Let $F\supset \mu_p $ be a complete discretely valued field, let $X$ be a smooth projective curve over $F$, and set $K = F(X)$. Assume a patching datum $\Gamma$ is given with local fields $K_v$. Assume there is a natural map
	\[\lambda_\Gamma \colon H^\bullet(G_K(p),\mathbb{F}_p) \longrightarrow
	\prod_{v\in \Gamma} H^\bullet(G_{K_v}(p),\mathbb{F}_p),\]
	whose restriction to any finite-dimensional $\langle W\rangle$ factors through a finite subproduct. Further, assume:
	\begin{enumerate}[label=(\roman*)]
		\item finite-dimensional subspaces embed via $\lambda_S$,
		\item their images are finite-type universally Koszul,
		\item compatibility under inclusion holds.
	\end{enumerate}
	Then $H^\bullet(G_K(p),\mathbb F_p)$ is asymptotically universally Koszul.
	
	\section{Questions}
	
	The preceding results leave open both structural and arithmetic questions concerning the scope of the asymptotic framework.
	
	\begin{enumerate}
		
		\item  Under what additional assumptions does asymptotic universal Koszulity force the canonical quadratic subalgebras \(B_W\) associated to finite-dimensional subspaces \(W\subseteq A_1\) to be universally Koszul? Equivalently, when does the sufficient local criterion of Section~\ref{sec:local} become a genuine characterization?
		
		\item Under what additional compatibility conditions on the filtered system \((A_i)\) does the assumption of Theorem~\ref{thm:finitely-generated-subalgebra} hold automatically for all finitely generated graded subalgebras? 
		
		\item Is \(H^\bullet(G_K(p),\mathbb F_p)\) asymptotically universally Koszul when \(K\supset \mu_p\) is a number field? More generally, does the same hold for an arbitrary field \(K\supset \mu_p\)?
		
		\item Which natural classes of fields \(K\supset \mu_p\) satisfy the assumptions of Proposition~\ref{prop:galois}? 
		
		\item To what extent can the conditional local-global descent principle of Theorem~\ref{thm:local-global-descent} be made unconditional in concrete arithmetic settings, such as function fields of curves over complete discretely valued fields?

	\end{enumerate}

\section*{Acknowledgments} The author would like to thank the anonymous referee for the careful reading of the manuscript and for valuable comments and suggestions which helped improve the content and exposition. The author would also like to thank the Editor, Mehrdad Nasernejad, for the kind encouragement and assistance throughout the publication process.

\end{document}